\newcommand{\Tfrac}[2]{%
  \ooalign{%
    $\genfrac{}{}{2pt}1{#1}{#2}$\cr%
    $\color{white}\genfrac{}{}{.4pt}1{\phantom{#1}}{\phantom{#2}}$}%
}
\newtheorem{theorem}{Theorem}[section]
\newtheorem*{theorem*}{Theorem}
\newtheorem{corollary}{Corollary}[theorem]
\newtheorem*{corollary*}{Corollary}
\theoremstyle{remark}
\newtheorem{remark}[theorem]{Remark}
\newtheorem*{remarks*}{Remarks}
\newtheorem{lemma}[theorem]{Lemma}
\theoremstyle{plain}
\newtheorem{proposition}[theorem]{Proposition}
\numberwithin{equation}{subsection}
\begin{document}
	\title{Iterative derivations on central simple algebras}
    \author{Manujith K. Michel and Varadharaj R. Srinivasan}

\address{Indian Institute of Science Education and Research (IISER) Mohali Sector 81, S.A.S. Nagar, Knowledge City, Punjab 140306, India.}
\email{\url{ph20019@iisermohali.ac.in}, \url{ravisri@iisermohali.ac.in}}	
\thanks{The first named author would like to acknowledge the support of CSIR grant 09/0947(13514)/2022-EMR-1}
\thanks{The second named author would like to acknowledge the support of DST-FIST grant SR/FST/MS-I/2019/46(C)}

\begin{abstract}
 We prove that an iterative derivation $\delta_F$ on a field $F$ can be extended to an iterative derivation $\delta_A$ on a central simple $F-$algebra $A$ if the characteristic of $F$ does not divide the exponent of $A$ in the Brauer group of $F.$ For a central simple $F-$algebra with an iterative derivation, we show the existence of a unique (up to isomorphism) Picard-Vessiot splitting field and from the nature its Galois group, we also describe the structure of the central simple algebra in terms of its $\delta_A-$right ideals. 
\end{abstract}

    \maketitle

	\section{Introduction}

An \emph{iterative derivation} on a unital ring $R$ is   a sequence  $\delta_R=(\delta_R^{(n)})_{n\in \mathbb{Z}_{\ge 0} }$ of additive maps on $R$ such that the following conditions hold: 
	\begin{enumerate}[(R1)]
		\item \label{identity-R}$\delta_R^{(0)}$ is the identity map on $R$
		\item\label{product-R} $\delta_R^{(n)}(ab)=\sum_{i+j=n}\delta_R^{(i)}(a)\delta_R^{(j)}(b)$ for all $a,b\in R$ and integer $n\geq 0.$
		
	    \item\label{iterative-R}  $\delta_R^{(n)}\circ \delta_R^{(m)} =\binom{m+n}{n}\delta_R^{(m+n)}.$
        \end{enumerate}
A ring (respectively, field) $R$ with an iterative derivation $\delta_R$ will be called a $\delta-$ring (respectively, $\delta-$field) and sometimes written as $(R, \delta_R).$ The constants of $(R, \delta_R),$ sometimes denoted by $R^\delta,$ is the set $\{a\in A:\delta_A^{(n)}(a)=0  \text{ for all } n\ge 1\}$, which is readily seen to be a subring of $R.$ If $R$ is a field then $R^\delta$ is a subfield of $R.$ 

Let $F$ be a $\delta-$field. A $\delta-F-$module $M$ over a $F$ is an $F-$module $M$ with a sequence $\delta_M:=(\delta_M^{(n)})_{n\in \mathbb Z_{\geq 0}}$ of additive maps on $M$ such that  the following conditions hold:
\begin{enumerate}[(M1)]
	\item $\delta_M^{(0)}$ is the identity map on $M$
	\item  $\delta_M^{(n)}(kv)=\sum_{i+j=n} \delta_K^{(i)}(k)\delta_M^{(j)}(v)$ for all $k\in K$ and $v\in M$
	\item \label{iterativity-module}$\delta_M^{(n)}\circ \delta_M^{(m)}=\binom{n+m}{n}\delta_M^{(n+m)}.$
\end{enumerate}

A $\delta-$ring $A$ is said to be a $\delta-$extension of a $\delta-$ring $B$ if $B$ is a subring of $A$ and the restriction of $\delta^{(n)}_A$ to $B$ is $\delta^{(n)}_B$ for all $n\ge 0.$ A $\delta-F-$algebra over $F$ is an $F-$algebra $A$ which is also a $\delta-$extension of $F.$ Note that a $\delta-F-$algebra is also a $\delta-F-$module.

In this article, for a $\delta-$field $F$ with an algebraically closed field of constants, we use the theory of iterative differential modules developed in \cite{Mat-Put} and develop a theory of $\delta-F-$central simple algebras that is independent of the characteristic of $F.$  We establish a connection between pushforward of torsors and projective representations of differential Galois groups and use it to generalize and reprove \cite[Theorem 1.2]{MMVRS-1} for fields of arbitrary characteristic. If $F$ is a differential field, that is, a field with a derivation map, of arbitrary characteristic, then it is known from a theorem of Hochschild (\cite[Theorem 5]{Hoc55}) that the derivation on $F$ extends to a derivation on $A.$ Furthermore, in a recent article (\cite{Michel-Sahu}), it is shown that derivations on $F$ can be extended to any form over $F$ of an $F^\delta-$algebra with smooth automorphism scheme.  Unfortunately, in general, these results do not hold for iterative derivations (see Remark \ref{nonexample}). However, when the characteristic of $F$ does not divide the exponent of $A$ then we show that one can extend a given iterative derivation on $F$ to an iterative derivation of $A$ (see Theorem \ref{extnofiterderiv}). We  also prove  several results concerning the structure of a $\delta-F-$central simple algebra and provide appropriate generalization of \cite[Theorem 1.1]{MMVRS-1} in the context of iterative derivations. It is worthy to observe that every higher derivation of finite rank\footnote{A \emph{higher derivation of rank} $n$ on unital ring $R$ is a finite sequence of additive maps $\delta^(0)_R, \delta^{(1)_R},\dots, \delta^{(n-1)_R}$ on the ring $R$ such that $\delta^{(0)}_A$ is the identity map and $\delta_R^{(m)}(ab)=\sum_{i+j=m}\delta_R^{(i)}(a)\delta_R^{(j)}(b)$ for all $a,b\in R$ and integer $0\leq m\leq n.$} on a ring $R$ can be extended to any azumaya $R-$algebra (See \cite{Roy-Sridharan}).

\section{Extension of iterative derivations on fields to central simple algebras}
Let $F$ be a $\delta-$field. Unless otherwise explicitly mentioned, $F-$algebras and $F-$modules considered are finite dimensional over $F$. The map $\delta^{(1)}_F$ is a derivation on $F.$ Therefore, if $F$ has characteristic $0,$ then it is easily seen that \begin{equation}\label{iterderiv-deriv} \delta^{(n)}_F=\frac{{\delta^{(1)}}^{n}}{n!},\end{equation} where ${\delta^{(1)}}^{n}$ denotes the map $\delta^{(1)}$ composed $n$ number of times. Thus, every iterative derivation on $F$ is uniquely determined by a derivation on $F.$ Now, from \cite[Theorem 6]{Hoc55}, it follows that iterative derivations on $F$ extends to that of a central simple algebra when $F$ is of characteristic $0.$ 


For an $F-$vector space $V,$ let $[V:F]$ denote the dimension of $V$ over $F.$ The Brauer group of $F$ shall be denoted by $\mathrm{Br}(F).$ For a central simple $F-$algebra $A$, let $[A]\in \mathrm{Br}(F)$ denote the Brauer class of $A,$ the order of $[A]$ in $\mathrm{Br}(F)$ is called the \emph{exponent} of $A$ and it is denoted by $\mathrm{exp}_F(A),$   $\mathrm{deg}(A):=\sqrt{[A:F]}$ and $\mathrm{ind}_F(A):=\mathrm{deg}(D),$ where $D$ is the unique division algebra over $F$ such that $[D]=[A].$ Let $\left(L|F,G(L|F),f\right)$ denote a crossed product algebra over $F,$ where $L$ is a finite Galois extension of $F$ having a Galois group  $G(L|F)$ and $f$ is an element of the second Galois cohomology group $\mathrm H^2(G(L|F),L^*).$

\begin{proposition} \label{extn-crossproduct}
Let $F$ be a field of characteristic $p>0$  and $B=(L|F, G(L|F), f)$ be a crossed product algebra over $F$ such that $p$ does not divide $\mathrm{exp}(B)$. Then every iterative derivation $\delta_F$ on $F$ extends to an iterative derivation $\delta_B$ on $B$ such that $\delta_B$ restricts to an iterative derivation on $L$
    \end{proposition}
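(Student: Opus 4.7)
The plan is to pass first through the splitting field $L$ and then translate the extension problem on $B$ into a cohomology question controlled by the exponent of $B$. Since $L/F$ is finite Galois (hence separable), the Taylor series map $\theta_F\colon F \to F[[t]]$, $a \mapsto \sum_n \delta_F^{(n)}(a) t^n$, lifts uniquely to an $F$-algebra homomorphism $\theta_L\colon L \to L[[t]]$ with $\theta_L \equiv \mathrm{id}_L \pmod{t}$, via Hensel's lemma applied to the minimal polynomial of a primitive element of $L$ over $F$. By uniqueness the lift commutes with $G := G(L|F)$ and inherits iterativity from $\theta_F$, giving the required $\delta_L$ on $L$.

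Writing $B = \bigoplus_{\sigma \in G} L u_\sigma$ with the crossed-product relations $u_\sigma \ell = \sigma(\ell) u_\sigma$ and $u_\sigma u_\tau = f(\sigma,\tau) u_{\sigma\tau}$, I would try the ansatz $\theta_B(u_\sigma) = \xi(\sigma) u_\sigma$ for an unknown cochain $\xi \colon G \to U$, where $U := 1 + t L[[t]] \subseteq L[[t]]^\times$. A direct computation using the $G$-equivariance of $\theta_L$ shows that $\theta_B$ is an $F$-algebra homomorphism iff $\partial \xi = \rho$ in the multiplicative $G$-module $U$, where $\rho(\sigma,\tau) := \theta_L(f(\sigma,\tau))/f(\sigma,\tau) \in Z^2(G, U)$. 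Iterativity of $\theta_B$ imposes the further flow identity coming from $\theta_B^{(s)} \circ \theta_B^{(t)} = \theta_B^{(s+t)}$; the key observation is that this flow identity holds automatically whenever $\xi$ takes the special form $\xi(\sigma) = \theta_L(z(\sigma))/z(\sigma)$ for some $z \colon G \to L^\times$, thanks to the iterativity of $\theta_L$ itself. Under this specialization $\partial \xi = \rho$ simplifies to the condition that $f \cdot (\partial z)^{-1}$ takes values in the $\delta_L$-constants $K^\times := (L^{\delta_L})^\times$; equivalently, $[f]$ must lie in the image of the natural map $H^2(G, K^\times) \to H^2(G, L^\times)$.

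Verifying this cohomological lift is the main obstacle, and it is where the hypothesis $p \nmid \mathrm{exp}(B)$ enters. The filtration $U = U_1 \supset U_2 \supset \cdots$ with $U_m := 1 + t^m L[[t]]$ has $G$-equivariant subquotients $U_m/U_{m+1} \cong L$ (additive, with the natural Galois action); the normal basis theorem gives $H^i(G, L) = 0$ for $i \ge 1$, and a limit argument then yields $H^i(G, U) = 0$ for $i \ge 1$, so $\rho$ is unconditionally a coboundary in $U$. To upgrade an arbitrary coboundary solution into one of the prescribed form $\theta_L(z)/z$, I would set $n := \mathrm{exp}(B)$, pick $c \colon G \to L^\times$ with $f^n = \partial c$ (possible since $[f]$ has order $n$), and observe that $\rho^n = \partial\bigl(\theta_L(c)/c\bigr)$ is already exhibited as a coboundary in the image of $L^\times$. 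Since $U$ is uniquely $n$-divisible when $n$ is invertible in characteristic $p$, a unique $n$-th root exists in $U$; combined with the fact that $\mu_n \subset K^\times$ (the $n$-th roots of unity are separable algebraic over the prime field, hence $\delta_L$-constants), this yields the required $z$. The coprime-to-$p$ condition is exactly what makes the $n$-th root extraction well-defined, matching the failure noted in Remark~\ref{nonexample}.
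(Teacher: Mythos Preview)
Your Taylor-morphism setup is sound through the point where you reduce the problem to finding a cochain $\xi\in C^1(G,U)$ with $\partial\xi=\rho$ and of the special shape $\xi=\theta_L(z)/z$; the vanishing of $H^i(G,U)$ via the $t$-adic filtration with additive subquotients $\cong L$ is also fine. The gap is in the final step. From $f^n=\partial c$ you get $\rho^n=\partial(\theta_L(c)/c)$, and unique $n$-divisibility of $U$ produces $\xi_1:=(\theta_L(c)/c)^{1/n}$ with $\partial\xi_1=\rho$. But you have not shown that this $\xi_1$ lies in the image of $L^\times\to U$, $\ell\mapsto\theta_L(\ell)/\ell$. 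That would require each $c(\sigma)$ to be an $n$-th power in $L^\times$ modulo $K^\times$, which is not implied by $p\nmid n$; unique $n$-divisibility of $U$ says nothing about its subgroup $L^\times/K^\times$. The appeal to $\mu_n\subset K^\times$ does not bridge this (and already presupposes $\mu_n\subset L$, which is not assumed). In cohomological terms you have reduced to showing that the image of $[f]$ in $H^2(G,L^\times/K^\times)$ vanishes, and knowing only that it is $n$-torsion there is not enough.

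The paper proceeds quite differently and sidesteps this obstruction by working one layer at a time. It forms the tower $F=F_0\supset F_1\supset\cdots$ with $F_i$ the constants of $\delta^{(p^{i-1})}$ on $F_{i-1}$, so each step is purely inseparable of exponent one; by Hochschild's argument the crossed product descends to $B_i=(L_i|F_i,G,f_i)$ with $B_i\otimes_{F_i}F\cong B$. The point is that at each stage $L_{i-1}^\times/L_i^\times$ is $p$-torsion (since $L_{i-1}^p\subseteq L_i$), so the $n$-torsion class $[f]$ dies there because $\gcd(n,p)=1$. The iterative derivation on $B$ is then read off directly from the identifications $B\cong B_i\otimes_{F_i}F$ without ever solving a single cohomology equation over the full constant field $K$. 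Your argument could be repaired by replacing $K$ with this tower $L_i$ and lifting $[f]$ step by step, but that is essentially the paper's method.
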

    
    \begin{proof} 
    
    Let $F_1$ to be the field of constants of the derivation map $\delta^{(1)}_F$ on $F.$ Then from condition (\ref{iterativity-module}), we obtain that $\delta^{(p)}_F$ maps $F_1$ to itself and we obtain that $\delta^{(p)}_F$ is derivation on $F_1.$ Iteratively,  for each $n\geq 1,$ we define $F_n$ to be the field of constants of the derivation map $\delta^{(p^n)}_F$ on $F_{n-1}.$  Thus, we have the following containment of fields  $$F=F_0\supseteq F_1 \supseteq \cdots .$$  
    Clearly, for each $n\geq 1,$ $x^p\in F_n$ for all $x\in F_{n-1}$ and therefore $F_n$ is a  purely inseparable of exponent one over $F_{n-1}$. Let  $B=(L|F, G(L|F), f).$ Since $p$ does not divide the $\mathrm{exp}(B),$ from \cite[proof of Theorem 5]{Hoc55},  we observe that there is a Galois extension $L_1$ of $F_1$ contained in $L$ such that $L$ is the compositum of $F$ and $L_1$ and the restriction map from $G(L|F)$ to $G(L_1/F_1)$
    is an isomorphism. Also, there is an element $f_1\in H^2(G(L_1|F_1),L_1^\times)$ such that the natural map from $(L_1|F_1,G(L_1|F_1),f_1)\otimes_{F_1}F$ to  $(L|F,G(L|F),f)$ is an isomorphism of $F-$algebras. Similarly,   for each $i\geq 0,$ we obtain a
crossed product algebra $B_i:=(L_i|F_i,G(L_i|F_i),f_i)$ over $F_i$ such that $B_i\otimes_{F_i}F_{i-1}=B_{i-1},$ where $B_0:=B.$ Thus, we have  the following filtration \begin{equation}\label{filtration-data}B=B_0\supseteq B_1\supseteq \cdots\end{equation} of $F_i-$algebras such that   $B_i\otimes_{F_i}F=B$ for every integer $i\geq 0.$ 

We shall now show any such filtration leads to an existence of an iterative derivation on $B$  that extends $\delta_F$. Let $\delta^{(0)}_B$ be the identity map on $B.$ For each positive integer $i,$ using the identification $B_i\otimes_{F_i}F=B,$ we define for each $p^{i-1}\leq j\leq p^i-1,$ \begin{equation}\label{definition-iterderiv}\delta^{(j)}_B\left(\sum^m_{k=1}x_k\otimes_{F_i} \alpha_k\right)=\sum^m_{k=1}x_k\otimes_{F_i}\delta^{(j)}_F(\alpha_k).\end{equation}  Since  the constants of the derivation map $\delta^{(p^i)}_F$ on $F_{i-1}$ is $F_i,$ the above maps are well-defined and $\delta_B:=(\delta^{(j)}_B)_{j\in \mathbb Z_{\geq 0}}$ is easily seen to be an iterative derivation on $B$ (See \cite[pp. 340-341]{MvdP03}).
    \end{proof}

    \begin{theorem}\label{extnofiterderiv}
        Let $F$ be of characteristic $p>0$ and  $A$ be a central simple $F-$algebra such that $p$ does not divide the $\mathrm{exp}_F(A).$ Then, every iterative derivation on $F$ extends to that of $A.$ 
    \end{theorem}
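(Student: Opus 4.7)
The plan is to adapt the strategy of Proposition \ref{extn-crossproduct} to a general central simple algebra by constructing compatible descents of $A$ along the filtration $F = F_0 \supseteq F_1 \supseteq \cdots$ built there from iterated constant subfields of $\delta_F$. Recall from that proof that each $F_{n-1}/F_n$ is purely inseparable of exponent one; as a consequence of the iterativity axiom, $\delta_F^{(j)}$ vanishes on $F_n$, and therefore is $F_n$-linear, for every $1 \leq j \leq p^n - 1$.

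The main step is to show, by induction on $n$, that there exists a central simple $F_n$-algebra $A_n$ with $A_n \otimes_{F_n} F_{n-1} \cong A_{n-1}$ (with $A_0 := A$). For this I would appeal to two standard facts about Brauer groups of purely inseparable extensions of exponent one in characteristic $p$: the restriction map $\mathrm{Br}(F_n) \to \mathrm{Br}(F_{n-1})$ is an isomorphism on the prime-to-$p$ torsion, and the index of a Brauer class of prime-to-$p$ exponent shares its prime divisors (so since any possible drop of index under a purely inseparable extension of exponent one is a power of $p$, the index is actually preserved). The hypothesis $p \nmid \mathrm{exp}_F(A)$ propagates inductively to $[A_{n-1}]$, which therefore descends uniquely to a Brauer class on $F_n$. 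Representing this class by a division algebra $D_n/F_n$, it has the same degree as the division component $D_{n-1}$ of $A_{n-1}$, so $D_n \otimes_{F_n} F_{n-1} \cong D_{n-1}$ as two division algebras of equal degree in the same Brauer class. Writing $A_{n-1} \cong M_r(D_{n-1})$, I set $A_n := M_r(D_n)$.

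With the descents in place, I would define $\delta_A^{(0)} = \mathrm{id}_A$ and, for $j \geq 1$ with $p^{n-1} \leq j \leq p^n - 1$, set
\[
\delta_A^{(j)}\Bigl( \sum_k x_k \otimes_{F_n} \alpha_k \Bigr) = \sum_k x_k \otimes_{F_n} \delta_F^{(j)}(\alpha_k)
\]
using the identification $A \cong A_n \otimes_{F_n} F$. The $F_n$-linearity of $\delta_F^{(j)}$ for $j < p^n$ ensures well-definedness, and the verification that $\delta_A = (\delta_A^{(j)})_{j \geq 0}$ satisfies axioms (R1)--(R3) and extends $\delta_F$ then follows exactly as at the end of the proof of Proposition \ref{extn-crossproduct}. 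The principal obstacle is the descent step itself: one must go beyond descending just the Brauer class (a standard statement) to descending the algebra up to isomorphism, which is achieved here by the index-preservation observation pinning down the isomorphism type of the descended division algebra.
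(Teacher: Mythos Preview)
Your proposal is correct and shares the paper's overall architecture: build a nested chain $A = A_0 \supseteq A_1 \supseteq \cdots$ of central simple $F_i$-algebras with $A_i \otimes_{F_i} F \cong A$, then define $\delta_A$ by formula~(\ref{definition-iterderiv}). The difference lies in how the descent is produced. The paper first passes to the underlying division algebra $D$, chooses a crossed-product matrix algebra $B = M_l(D)$, applies Hochschild's explicit descent from Proposition~\ref{extn-crossproduct} to obtain crossed products $B_i$ over $F_i$, then replaces each $B_i$ by a suitable tensor power to strip off any $p$-part of the exponent that may have appeared over $F_i$, and finally extracts the associated division algebra $D_i$. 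You instead invoke directly that the restriction $\mathrm{Br}(F_n) \to \mathrm{Br}(F_{n-1})$ is an isomorphism on prime-to-$p$ torsion for a purely inseparable extension, bypassing both the crossed-product detour and the tensor-power trick. Your route is shorter and more conceptual; the paper's is more self-contained, resting only on the concrete Hochschild construction already set up in Proposition~\ref{extn-crossproduct}. Both arguments then converge on the same index-preservation step (the paper cites \cite[Corollary~4.5.11]{GSz}) to conclude that $D_n \otimes_{F_n} F_{n-1}$ is the division algebra $D_{n-1}$. One small caveat: the extension $F_{n-1}/F_n$ is purely inseparable of exponent one but need not be of finite degree, so your ``any drop of index is a power of $p$'' reasoning should, as the paper does, be carried out over the finite intermediate subextensions.
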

    \begin{proof}
Let $D$ be the division $F-$algebra from the Brauer class of $A.$ We shall first prove that $D$ admits an iterative derivation that extends $\delta_F.$ 
It is known that there is an integer $l$ such that $B:=M_l(D)$ is a crossed product $F-$algebra. Moreover, $p$ does not divide the exponent of $B$. From \cite[Proof of Theorem 5]{Hoc55}, we have for each integer $i\geq 0,$ a crossed product $F_i-$algebra $B_i,$ as defined  in Proposition \ref{extn-crossproduct}, such that  $B_i\otimes_{F_i}F=B$ for every integer $i\geq 0.$

Let $\mathrm{exp}_{F_i}(B_i)=n_im_i,$ where $p$ is prime to $n_i$ and $m_i$ is a power of $p.$ Let $k_i$ and $t_i$ be integers such that $m_ik_i+t_i \mathrm{exp}_F(B)=1.$ For each $i\geq 0,$ let $A_i=B^{\otimes^{m_ik_i}_{F_i}}_i.$ We  have the $F-$algebra isomorphisms  $$A_i\otimes_{F_i}F\cong (B_i\otimes_{F_i} F)^{\otimes^{m_ik_i}_F}\cong B^{\otimes^{m_ik_i}_F}.$$ Thus, in the Brauer group $\mathrm{Br}(F),$  we have \begin{equation}\label{power-brauerequivalent}[A_i\otimes_{F_i}F]=[B]^{m_ik_i}=[B]^{1-t_i\mathrm{exp}(B)}=[B].\end{equation} Since $[A_i]^{n_i}$ is the identity in $Br(F_i)$ and $p$ does not divide $n_i,$  we obtain that $p$ does not divide the exponent of $A_i$ in $\mathrm{Br}(F_i).$

 Let $D_i$ be the division algebra such that $[D_i]=[A_i]$ in $\mathrm{Br}(F_i).$ Let $K_i$ be a finite extension of $F_i$ contained in $F.$ Then $K_i$ is purely inseparable and $[K_i:F_i]$ is a power of $p.$ Therefore $\mathrm{ind}_{F_i}(D_i)$ is prime to $[K_i:F_i]$ and it is known that $$\mathrm{ind}_{K_i}(D_i\otimes_{F_i}K_i)=\mathrm{ind}_{F_i}(D_i)$$ and that  $D_i\otimes_{F_i}K_i$ is a division $K_i-$algebra (\cite[Corollary 4.5.11]{GSz}). Allowing $K_i$ to vary over all finite extensions of $F_i$ contained in $F,$ we obtain that $D_i\otimes_{F_i}F$ is also a division $F-$algebra. Since $D_i\otimes_{F_i} F$ belongs to the Brauer class of $A_i\otimes_{F_i}F,$ from (\ref{power-brauerequivalent}), it also belongs to the Brauer class of $B.$ Therefore, $D_i\otimes_{F_i}F=D$ for all $i\geq 0.$ Since $B\supseteq B_1\supseteq B_2\supseteq\cdots,$ we obtain a filtration  $D\supseteq D_1\supseteq D_2\cdots $ as in (\ref{filtration-data}). Thus, $D$ admits an iterative derivation that extends the derivation on $F.$

Let $A$ be any central simple $F-$algebra with $p$ not dividing $\mathrm{exp}_F(A).$ Since $A=M_n(D)$ for the division $F-$algebra $D$ with $[D]=[A]$ in $\mathrm{Br}(F)$ and since $D$ admits a filtration $$D\supseteq D_1\supseteq D_2$$ with $D_i\otimes_{F_i}F=D,$ we obtain $$M_n(D)\supseteq M_n(D_1)\supseteq M_n(D_2)\supseteq \cdots $$ and that $M_n(D_i)\otimes_{F_i}F\cong M_n(D_i\otimes_{F_i}F)\cong M_n(D)$ for each $i\geq 0.$ Now from (\ref{definition-iterderiv}), we obtain an iterative derivation on $A$ that extends $\delta_F.$
    \end{proof}

\begin{remark}\label{nonexample}
Suppose that $F=F_p(t)$ be the rational function field over the finite field with $p$ elements. For each $i\geq 0,$ let $F_i=F^{p^i}$ and observe that $$F_0=F\supset F_1\supset F_2\supset \cdots$$ and that  $[F^{p^i}:F^{p^{i-1}}]=p.$ Then, there is an iterative derivation $\delta_F$ on $F$ \cite[Proposition 2.2]{Mat-Put}. 
 Let $A$ be a division algebra over $F$ of degree $p^n.$ We shall now show that $A$ admits no iterative derivation that extends $\delta_F.$
Suppose on the contrary that $(A,\delta_A)$ is a $\delta-$ring extension of $(F,\delta_F).$ Let $A_0=A$ and for each $i\geq 1,$  $$A_i:=\{x\in A_{i-1}\ | \ \delta^{p^{i-1}}_A(x)=0\}.$$ Then, each $A_i$ is a division $F_i-$algebra and the natural multiplication map provide an isomorphism  $A_i\otimes_{F_i}F\cong A$ of $F_i-$algebras. Any  division algebra $A_i$ over $F_i$ is known to be cyclic (See \cite[Theorem 3]{Albert-1937}): $A=(L|F, G(L|F), f),$ where $G(L|F)$ is a cyclic group and $L=F(y),$ where $y^{p^i}=f\in F_i.$ By the definition of $F_i,$ there is $x\in F$ such that $x^{p^n}=f.$ Then $y\otimes_{F_i}1-1\otimes_{F_i}x$ is nilpotent in $A_i\otimes_{F_i}F.$ Now since $A_i\otimes_{F_i} F\cong A,$ we obtain that $A$ is not a division algebra, a contradiction.
\end{remark}

\section{ Splittings of $\delta-$central simple algebras}

Throughout this section $F$ is a $\delta-$field with an algebraically closed field of constants $F^\delta.$ If $(A,\delta_A)$ and $(B,\delta_B)$ are $\delta-F-$ algebras (respectively, $\delta-F-$ modules) then the tensor product $A\otimes B$ has the  iterative derivation $(\delta_A\otimes \delta_B)^{(n)}(a\otimes b)=\sum_{i+j=n} \delta_A^{(i)}(a)\otimes\delta_B^{(j)}(b),$ making $(A\otimes B, \delta_{A\otimes B})$ a $\delta-F-$ algebra (respectively, a $\delta-F-$module). 
A morphism  $\phi: A\to B$ of $\delta-F-$algebras (respectively, $\delta-F-$modules) is an $F-$morphism such that $\phi(\delta_A^{(n)}(a))=\delta_B^{(n)}(\phi(a)) $ for all $a\in A$ and $n\ge 0.$ Let $(K,\delta_K)$ be an extension field of $(F, \delta_F).$  The $\delta-$field $K$ \emph{splits} a $\delta-F-$algebra (respectively, a $\delta-F-$module) $A$, or equivalently, $(A\otimes_F K, \delta_A\otimes \delta_K)$ is a \emph{split} $\delta-K-$algebra (respectively, $\delta-K-$module) if the natural map $\mu_K:(A\otimes_F K)^\delta\otimes_{F^\delta}K\to A\otimes_F K$ is a $\delta-K-$isomorphism. If $A$ is a $\delta-F-$central simple algebra split by a $\delta-$field $K$ then, since $A\otimes_F K$ is a central simple $K-$algebra, $(A\otimes_FK)^{\delta}$ is a central simple algebra over the algebraically closed field $F^\delta$. Therefore, the $\delta-F-$algebra  $(A\otimes_F K)^\delta\otimes_{F^\delta}K$ is isomorphic to the $\delta-F-$algebra $\mathrm{M}_n(F^\delta)\otimes_{F^{\delta}}(K),$ which in turn is isomorphic to the $\delta-F-$algebra $(\mathrm M_n(K), \Delta_{K}),$ where for any matrix $(a_{ij})\in M_n(K)$ and $l\geq 0,$ $$\Delta^{(l)}_K((a_{ij}))=(\delta^{(l)}_K(a_{ij})).$$

The $\delta-$field extension $K$ of $F$ is said to be  Picard-Vessiot extension for a $\delta-F-$module $M$ if 
\begin{enumerate}[(i)] 
	\item $K^\delta=F^\delta$
	\item \label{muisomorphism} $(M\otimes K, \delta_M\otimes\delta_K)$ is a split  $\delta-K-$module.
	\item \label{generation}If $e_1,\dots, e_n$ is an  $F-$basis of $M$ then $K$ is generated as a field over $F$ by the coefficients of all $v\in (M\otimes K)^{\delta_M\otimes \delta_K}$ with respect to the $K-$basis $e_1\otimes 1, \dots, e_n\otimes 1.$
\end{enumerate}

We shall now recall several facts from the Picard-Vessiot theory of iterative differential modules (\cite{Mat-Put, MvdP03}). For a given $\delta-F-$module $M$, Picard-Vessiot extensions exist and are unique up to $\delta-F-$isomorphisms.  The map $\delta^{(1)}_F$ is a derivation on $F.$ If $F$ has characteristic $0$  then it is known that $K$ is  a Picard-Vessiot extension of the differential field $F$ with a derivation $D$ if and only if the field $K$ is a Picard-Vessiot extension of the $\delta-$field $F,$ where $\delta_F$ is defined as \begin{equation}\label{iterderiv-deriv}\delta^{(n)}_F=\frac{D^{n}}{n!},\end{equation} where $D^{n}$ denotes the map $D$ composed $n$ number of times.  The crucial point here is that  the differential field $(F, D)$ and the $\delta-$field $F$ have the same field of constants. 

Let $K$ be a Picard-Vessiot extension of $F$ for a $\delta-F-$module $M.$
The group $\mathrm{G}(K|F)$ of all $\delta-F-$automorphisms of $K$ is called the \emph{$\delta-$Galois group} of $K.$ The action of $\mathrm{G}(K|F)$ on $M\otimes_F K$ given by 
  \begin{equation}\label{actiononconstants}
      \sigma \cdot(v\otimes x)=v\otimes \sigma(x), \quad\sigma \in G(K|F), \quad v \in M, \quad x\in K
  \end{equation}
  restricts to an action on $(M\otimes K)^\delta$ and yields a faithful representation $\phi_M: G\to \mathrm{GL}_n(M\otimes K)^\delta$ over $F^\delta.$ The image of $\phi_M$ under this representation is a reduced and therefore it is a smooth closed algebraic subgroup of $\mathrm{GL}((M\otimes K)^\delta).$ The largest $F^\delta-$vector subspace of $K$, denoted by $\mathrm{T}(K|F),$ on which $\mathrm{G}(K|F)$ acts rationally forms a $\delta-F-$algebra called the \emph{Picard-Vessiot algebra} or the {Picard-Vessiot} ring of $M$. This action gives an action of $G$ on  $X=\mathrm{maxspec}(T),$ which is an affine algebraic scheme over $K.$ It is known that $X$ is a $G-$torsor over $K.$

\subsection{Pushforward of Torsors.} Here we state few facts concerning pushforward of torsors.
   We start by recalling the Tannakian formalism of torsors.  Let $G$ be an $F^\delta-$group . 
    The $G-$torsors  over $F$ are in bijective correspondence with  fiber functors $\mathscr F: \mathrm{Rep}(G)\rightarrow \mathrm{Vect}_F$ from the category of finite dimensional representations of $G$ over $F^\delta$ to the category of finite dimensional vector spaces over $F$. The functor is given by $Y\mapsto \mathscr F_Y,$ where  $\mathscr F_Y(V)=(V \otimes F[Y])^G.$ It is well known that $\mathrm{dim}_{F^\delta}(V)=\mathrm{dim}_F(\mathscr F_Y(V)).$ Moreover $\mathscr F_Y$ is neutralized by $F[Y]$ in the following sense. The composition functor $V\mapsto \mathscr F_Y(V)\otimes F[Y]$ from $Rep(G)$ to the category of finitely generated projective modules over $F[Y]$ is equivalent to the functor $V\mapsto \mathbf{F}(V)\otimes F[Y]$ from $\mathrm{Rep}(G)$ to the finitely generated projective modules over $F[Y],$  where $\mathbf F$ is the forgetful functor from $\mathrm{Rep}(G)$ to the category of $F^\delta-$vector spaces. That is, we have the following isomorphism of $F[Y]-$modules 
  $$\mathscr F_Y(V)\otimes_F F[Y]\cong \mathbf F(V)\otimes_{F^\delta}F[Y].$$

Let $\phi:G\rightarrow H$ be a homomorphism of $F^\delta-$groups and $Y$ be a $G-$torsor over $F.$ The pushforward of $Y$ along $\phi$ is the  $H-$torsor  corresponding to the fibre functor $\mathscr F_Y \circ \phi^*:Rep(H)\rightarrow Rep(G)\rightarrow Vect_F$ where $\phi^*$ is the obvious functor induced by $\phi.$ Alternatively, the pushforward can be defined as follows. The functor of points of  $(F[Y]\otimes F^\delta[H])^G$ is given by sheafifying the presheaf  $\Tfrac{Y\times H}{G}$ of $G-$orbits, where $G$ acts on $Y\times H$ diagonally. The scheme thus obtained is called the contracted product of $Y$ and $H$ over $G,$ denoted by $Y\times^G H.$ The action of $H$ on $Y\times^G H$ is given by the action of $H$ on the right factor $H$ and under this action, $Y\times^G H$ is an $H-$torsor from  called the pushforward of $Y$ along $\phi$ (See \cite[Appendix]{MR3846058}).

  Given a homomorphism $\psi:G\rightarrow PGL_n=\mathrm{Aut}(M_n),$ the pushforward of $Y$ along $\psi$ provides a $\mathrm{PGL_n}-$torsor $Y\times^{G} \mathrm{PGL_n}$ over $F$ that corresponds to the central simple algebra $(F[Y]\otimes M_n(F^\delta))^G$ over $F$. Note that this $\mathrm{PGL_n}$ torsor's coordinate ring  is $(F^\delta[\mathrm{PGL_n}]\otimes_{F^\delta
  } F[Y])^G,$ where the group $G$ acts diagonally on $F^\delta[\mathrm{PGL_n}]\otimes_{F^\delta} F[Y]$.

  

Let $G:=G(K|F),$  $\{\{M\}\}$ be the rigid monoidal category of $\delta-F-$modules generated by $M$ and $\mathrm{Rep}(G)$ be the category of all representations of $G$ over $F^\delta.$ Then, $\mathscr S: \{\{M\}\}\to \mathrm{Rep}(G),$ given by $\mathscr S(N)=(N\otimes_F K)^\delta,$ for any $N\in \{\{M\}\},$ establishes an equivalence of categories with a quasi-inverse given by $V\mapsto (V\otimes_{F^\delta} K)^G.$   It is worthy to observe that for $X=\mathrm{maxspec}(T(K|F)),$ we have $F[X]=T(K|F)$ and  using a dimension count, we conclude that $\mathscr F_X(V)=(V\otimes_{F^\delta} K)^G.$ Thus, if we compose the quasi-inverse $V\mapsto (V\otimes_{F^\delta} K)^G$ with the forgetful functor from $\{\{A\}\}$ to $\mathrm{Vect}_F$ we obtain $\mathscr F_X.$

\subsection{Splittings of $\delta-F-$algebras} 

Let $A$ be a $\delta-F-$alegbra and  $K$ be a Picard-Vessiot extension of a $\delta-F-$module $A.$ Then, by definition, we have the natural $\delta-K-$module isomorphism \begin{equation}\label{mu-isomorphism}\mu_K:(A\otimes_F K)^\delta\otimes_{F^\delta}K\to A\otimes_F K.\end{equation} This isomorphism  is readily seen to be a $\delta-K-$algebra isomorphism. Thus, the Picard-Vessiot extension $K$ also splits the  $\delta-F-$algebra $A.$ Since $A\otimes_F K$ is a central simple $K-$algebra, it follows that $(A\otimes_FK)^\delta$ is a central simple $F^\delta-$algebra. Thus, the image of the homomorphism $G\rightarrow GL((A\otimes K)^\delta),$ defined in  \ref{actiononconstants}, is contained in $\mathrm{Aut}((A\otimes K)^\delta)=M_n(F^\delta),$ where $n=\mathrm{dim}_{F^\delta}((A\otimes_FK)^\delta)=\mathrm{deg}(A).$ Thus, we obtain a homomorphism of algebraic groups $\mathrm{G}(K|F)\rightarrow PGL_n(F^\delta),$ which we sometimes call a projective representation of degree $n.$ Recall that the action of $\mathrm{G}(K|F)$ on the Picard Vessiot ring $\mathrm{T}(K|F)$
  induces an action of $G$ on $X=\mathrm{maxspec}(T(K|F))$ which makes $X$ a $G-$torsor over $F.$


  
  \begin{theorem} There is  a bijective correspondence between the set of all isomorphism classes of $\delta-F-$central simple algebras split by $K$ and having degree $n$ over $F$ and the set of all inequivalent projective representations of $\mathrm{G}(K|F)$ having degree $n.$ Under this bijection the central simple $F-$algebra  constructed from a projective representation $\phi$ of $\mathrm{G}(K|F)$ corresponds to the $PGL_n-$torsor obtained by the pushforward of $X=maxspec(T(K/F))$ along $\phi.$
  \end{theorem}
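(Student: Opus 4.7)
The plan is to define mutually inverse assignments between the two sets and then identify the inverse assignment with the pushforward of torsors described in the statement. Starting with a $\delta-F-$central simple algebra $A$ of degree $n$ split by $K$, the excerpt has already established that $(A\otimes_F K)^\delta$ is a central simple algebra over the algebraically closed field $F^\delta$, hence isomorphic to $M_n(F^\delta)$. The action (\ref{actiononconstants}) of $G:=G(K|F)$ on $A\otimes_FK$ commutes with $\delta_A\otimes\delta_K$ and restricts to an $F^\delta-$algebra action on $(A\otimes_FK)^\delta$; after a choice of isomorphism with $M_n(F^\delta)$, this gives a morphism of algebraic groups $\phi_A\colon G\to\mathrm{PGL}_n(F^\delta)$. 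A different choice of isomorphism, or a $\delta-F-$isomorphism $A\cong A'$, produces a projective representation conjugate to $\phi_A$, so $A\mapsto[\phi_A]$ is well defined on isomorphism classes.

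For the opposite direction, given a projective representation $\phi\colon G\to\mathrm{PGL}_n(F^\delta)$, I would form the $\mathrm{PGL}_n-$torsor $Y_\phi:=X\times^G\mathrm{PGL}_n$ obtained by pushing forward $X=\mathrm{maxspec}(T(K|F))$ along $\phi$, and associate to it the central simple $F-$algebra $A_\phi:=(T(K|F)\otimes_{F^\delta}M_n(F^\delta))^G$, in which $G$ acts diagonally with $\phi$ governing the conjugation action on $M_n(F^\delta)$. The iterative derivation on $T(K|F)$ is $G-$equivariant and extends to the tensor product trivially on $M_n(F^\delta)$, so it descends to an iterative derivation on $A_\phi$, making $A_\phi$ a $\delta-F-$algebra. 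The Picard-Vessiot identity $T(K|F)\otimes_F K\cong F^\delta[G]\otimes_{F^\delta}K$, expressing the trivialization of the torsor $X$ over $K$, then yields an isomorphism $A_\phi\otimes_F K\cong M_n(K)$ under which the inherited derivation matches the canonical $\Delta_K$, so $K$ splits $A_\phi$ as a $\delta-K-$algebra.

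That the two assignments are mutually inverse follows from the Tannakian equivalence $\mathscr S\colon \{\{M\}\}\to \mathrm{Rep}(G)$ recalled in the excerpt, whose quasi-inverse $V\mapsto (V\otimes_{F^\delta}K)^G$, composed with the forgetful functor to $\mathrm{Vect}_F$, is the fiber functor $\mathscr F_X$. Under $\mathscr S$ the $\delta-F-$algebra $A$ corresponds to $(A\otimes_F K)^\delta$ with its $G-$algebra structure, and the quasi-inverse recovers $A\cong(T(K|F)\otimes_{F^\delta}(A\otimes_F K)^\delta)^G$ as a $\delta-F-$algebra, identifying $A_{\phi_A}$ with $A$. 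Conversely, applying the splitting computation above to $A_\phi$ recovers $(A_\phi\otimes_F K)^\delta\cong M_n(F^\delta)$ with the $G-$action $\phi$, whence $\phi_{A_\phi}$ is conjugate to $\phi$. The final statement that $A_\phi$ is the central simple algebra attached to the $\mathrm{PGL}_n-$torsor $Y_\phi$ is built into the construction of the contracted product $X\times^G\mathrm{PGL}_n$.

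The principal technical obstacle will be to verify that the iterative derivation on $A_\phi$, inherited from $T(K|F)\otimes_{F^\delta}M_n(F^\delta)$, really splits $A_\phi$ over $K$, that is, that $(A_\phi\otimes_F K)^\delta$ equals $M_n(F^\delta)$ and generates $A_\phi\otimes_F K$ over $K$ as a $\delta-K-$algebra. This amounts to showing that the Picard-Vessiot isomorphism $T(K|F)\otimes_F K\cong F^\delta[G]\otimes_{F^\delta}K$ is simultaneously $G-$equivariant, $\delta-$equivariant, and compatible with the algebra structures, so that taking $G-$invariants and taking $\delta-$constants commute appropriately. Once this compatibility is secured, the reconciliation of the three descriptions (the $\delta-F-$algebra $A_\phi$, the pushforward torsor $Y_\phi$, and the projective representation $\phi$) follows formally from the Tannakian dictionary.
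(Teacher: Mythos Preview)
Your proposal is correct and follows essentially the same approach as the paper: both directions of the correspondence are constructed exactly as in the paper (the projective representation from the $G$-action on $(A\otimes_FK)^\delta\cong M_n(F^\delta)$, and the inverse via $A_\phi=(M_n(F^\delta)\otimes_{F^\delta}F[X])^G$ with $F[X]=T(K|F)$), and the identification with the pushforward torsor is the same. If anything, you are more careful than the paper's own argument: the paper's proof does not explicitly check that $A_\phi$ inherits an iterative derivation split by $K$, nor does it spell out that the two assignments are mutually inverse, whereas you invoke the Tannakian equivalence $\mathscr S$ and its quasi-inverse $\mathscr F_X$ (already set up in the preceding subsection) to close both loops. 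The ``principal technical obstacle'' you flag is genuine but is precisely what the torsor isomorphism $\mathscr F_X(V)\otimes_F F[X]\cong V\otimes_{F^\delta}F[X]$ provides, and the paper uses exactly this isomorphism (in the displayed line $(M_n(F^\delta)\otimes_{F^\delta}F[X])^G\otimes_F F[X]\cong M_n(F[X])$) without further comment.
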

  \begin{proof} Let $A$ be a $\delta-F-$central simple algebra of degree $n$ split by $K.$ Then,  $$(A\otimes K)^\delta=(A\otimes F[X])^\delta\cong M_n(F^\delta).$$ Therefore $\mathrm{G}(K|F)$ acts both rationally and also as a group of $F-$automorphisms of $M_n(F^\delta),$ providing a projective representation of degree $n.$ Conversely, let $\phi:G\to \mathrm{PGL_n}(F^\delta)=\mathrm{Aut}(M_n(F^\delta))$ be a map of algebraic groups.  Then $(M_n(F^\delta)\otimes_{F^\delta} F[X])^{G(K|F)}$ is an $F-$algebra. Since $$(M_n(F^\delta)\otimes_{F^\delta} F[X])^{G(K|F)}\otimes_F F[X]\cong M_n(F^\delta)\otimes_{F^\delta} F[X]\cong M_n(F[X]),$$ we obtain $(M_n(F^\delta)\otimes_{F^\delta} F[X])^{G(K|F)}$ is a central simple $F-$algebra. As noted earlier, the $\mathrm{PGL_n}$ torsor $X\times ^{G(K|F)} \mathrm{PGL_n}$ corresponds to the central simple $F-$algebra $(M_n(F^{\delta})\otimes_{\delta} F[X])^{G(K|F)}.$ \end{proof}

  The push forward of an $H-$torsor $X$ along a homomorphism $H\rightarrow G$ can be realised as the image of $X$ under the map $H^1(F,H)\rightarrow H^1(F,G)$ after identifying the isomorphism classes of torsors over $F$ with the first Galois cohomology. Using this fact, we make the following remarks. 
  \begin{remark}
 \item  \begin{enumerate}
     \item \label{trivialtorsor-matrixalgebra} Let the $A$ be a $\delta-F-$central simple algebra split by a Picard Vessiot extension $K.$ If the $\mathrm{G}(K|F)-$torsor $\mathrm{maxspec}(T(K/F))$ is trivial then the pushforward of this torsor along the projective representation is also trivial and therefore the central simple algebra $A,$ corresponding to this trivial $\mathrm{G}(K|F)-$torsor must be a matrix $F-$algebra.\\

  \item The converse of (\ref{trivialtorsor-matrixalgebra}) need not hold in general. However, if the associated projective representation of $\mathrm{G}(K|F)$ embeds as a parabolic subgroup of $\mathrm{PGL_n}(F^\delta)$ then the converse does hold. To see this, let $A$ be a matrix $F-$algebra. By definition, $A$ must correspond to the pushforward of the torsor $\mathrm{maxspec}(T(K|F))$ along the projective representation and therefore, this pushforward must be trivial. But,   in this case,  the map  $H^1(F,G)\rightarrow H^1(F,PGL_n)$ induced by the projective representation is injective \cite[$\S$ 2, Exercise 1]{serre1994cohomologie}. Therefore, $\mathrm{maxspec}(T(K|F))$ must be a trivial torsor for $\mathrm{G}(K|F)$ over $F.$\\
   \item Note that torsors of a general linear group
   are trivial. Therefore, if a  projective representation $\phi$ of a differential Galois group lifts to a linear representation $\tilde{\phi},$ that is, we have the commutative diagram of algebraic groups

   \begin{equation*}\label{algebraicgrouprep}
		\begin{tikzcd}
			& \mathrm{GL}_n(F^{\delta})\arrow[d, rightarrow, "\pi"] \\
			G(K|F)\arrow[r,rightarrow, "\phi"] \arrow[ur, rightarrow, dashed, "\tilde{\phi}"]&\mathrm{PGL}_n(F^\delta)
		\end{tikzcd}
	\end{equation*}
    
then, the  pushforward of the torsor $\mathrm{maxspec}(T(K|F))$ along $\phi$ is trivial and we obtain that $A$ is a matrix $F-$algebra. \\
   
   \item In \cite{article}, authors have a field $F$ of characteristic zero and a  Picard Vessiot extension $K$ such that differential Galois group $\mathrm{G}(K|F)=\mathrm{PGL}_2$ and that $\mathrm{maxspec}(T(K/F))$ is a nontrivial $PGL_2$ torsor. If we take identity map as the  projective representation of $\mathrm{PGL}_2$ then the $\delta-F-$central simple algebra, which is split by $K$ must be a quaternion division $F-$algebra. \\

   \item If $A$ is a central simple $F-$algebra and the characteristic of $F$ does not divide $\mathrm{exp}(A)$ then there is a central simple $F-$algebra $B,$ which is Brauer equivalent to $A,$ admitting an iterative derivation $\delta_B$ such that $(B,\delta_B)$ is split by a finite Galois extension of $F$. This can be seen as follows. Since $F^\delta$ contains all roots of unity there is a crossed product $F-$algebra $B$ that is Brauer equivalent to $A$ such that $B=(K|F,G(K|F),f),$ where  $f$ is a cocycle having values in $F^\delta$ \cite[Lemma 5.3]{MeirEhud}. If $\{u_\sigma\ | \ \sigma\in G(K|F)\}$ is the  $F-$basis of $B$ such that $u_\sigma u_\tau=f(\sigma, \tau) u_{\sigma\tau}$ then the unique iterative derivation on $K$ that extends the iterative derivation on $F$  can be extended further to an iterative derivation on $B$  by setting for all $j\geq 0,$ $$\delta^{(j)}_B\left(k_0+\sum_{\substack{\sigma\in G(K|F) \\ \sigma\neq 1}}k_\sigma u_\sigma\right)=\delta^{(j)}_K(k_0)+\sum_{\substack{\sigma\in G(K|F) \\ \sigma\neq 1}}\delta^{(j)}(k_\sigma) u_\sigma$$
   Note that the condition  (\ref{identity-R}) readily follows and that elements of $\mathrm G(K|F)$ commutes with $\delta_K$. Now, for $\sigma, \tau\in G(K|F)$ and $k_\sigma, k_\tau\in K$ and for nonnegative integers $i,j,n,$  we have \begin{align*}&\delta^{(n)}_B\left(k_\sigma u_\sigma k_\tau u_\tau\right)=\delta^{(n)}_B\left(k_\sigma \sigma (k_\tau) u_{\sigma}u_{\tau}\right)=\delta^{(n)}_K\left(k_\sigma \sigma (k_\tau)f(\sigma,\tau)\right)u_{\sigma\tau}=\delta^{(n)}_K\left(k_\sigma \sigma (k_\tau) \right)f(\sigma,\tau)u_{\sigma\tau}\\ &\delta^{(i)}_B\left(k_\sigma u_\sigma\right) \delta^{(j)}_B\left(k_\tau u_\tau\right)=\delta^{(i)}_K\left(k_\sigma \right)u_\sigma \delta^{(j)}_K\left(k_\tau\right)u_\tau=\delta^{(i)}_K(k_\sigma)\delta^{(j)}_K(\sigma(k_\tau))f(\sigma,\tau)u_{\sigma\tau}.\end{align*} From these equations, we obtain that the maps $\delta^{(j)}_B$ satisfy conditions (\ref{product-R}) and (\ref{iterative-R}).  Let $\{v_1,\dots, v_n\}$ be a $F-$basis of $K.$ Then, for each $1\leq i\leq n,$ the elements $$\sum^{n}_{j=1}v_j\otimes_F v^{(j-1)}_i$$  are readily seen to be constants. Since for each $\sigma\in \mathrm G(K|F),$ $u_\sigma$ is also a constant, we obtain that the natural mapping $$(B\otimes_FK)^\delta\otimes_{F^\delta}K\mapsto B\otimes_F K$$ is a $\delta-F-$isomorphism. Thus, $K$ splits the $\delta-F-$algebra $B.$\\
    
   \item Similar to the profinite absolute Galois group of a field, which is the inverse limit over finite Galois groups, in the context of $\delta-$fields, we have a pro-algebraic absolute $\delta-$Galois group $\mathrm G_F,$ which is the inverse limit over all  $\delta-$Galois groups. The isomorphism classes of  all $\delta-F-$central simple algebras are given by the projective representations of $\mathrm G_F.$ One can define a $\delta-F-$Brauer monoid, similar to differential Brauer monoid in characteristic zero (See \cite{Mag23}), as the direct limit over the direct system of projective representations of $\mathrm G_F.$ Here the ordering in the direct system  is given by divisibility of positive integers and the arrows are given by mapping a projective representation $\phi$ of degree n to the composite $\alpha_{n,m}\circ \phi,$ where $\alpha_{n,m}:\mathrm{PGL_n}\rightarrow \mathrm{PGL_{mn}}$ is the natural map induced by $X\mapsto \otimes^n X$ on matrices.\\

   \item Let $V$ be a $F^\delta-$vector space with a structure satisfying the following conditions i) $\mathrm{Aut}(V),$ the subgroup of $\mathrm{GL}(V)$ of all structure preserving automorphisms is Zariski closed. ii) Each $\mathrm{Aut}(V)-$torsor over $F$ corresponds to a form of $V$ over $F$. Then, for any linear algebraic group $G$ and a morphism $\phi: G\to \mathrm{Aut}(V)$ and for any  $G-$torsor $X$ over $F,$ the pushforward of $X$ along $\phi$ gives an $\mathrm{Aut}(V)-$torsor that corresponds to a form of $V,$ namely $A:=\left(V\otimes_{F^\delta}F[X]\right)^G$ having a structure induced by the one on $V.$ For example, a $F^\delta-$vector space $V$ with a $(p,q)-$tensor is readily seen to satisfy the conditions i) and ii). Now, if $G$ is a $\delta-$Galois group of a Picard-Vessiot extension $K$ of $F$ and $X=\mathrm{maxspec}(T(K|F))$ then $A$ is a $\delta-F-$module, where $\delta_A$ respects the structure on $A$. Furthermore, there is an isomorphism of $\delta-K-$modules $$A\otimes_FK\to V\otimes_{F^\delta} K$$ which also respects the structure on $A\otimes_F K$ and $V\otimes_{F^\delta}K.$ Thus, we obtain a natural extension of the notion of a $\Phi-$object in the sense of \cite[$\S$ 3.3]{Tsui-Man} for field of positive characteristic.  Conversely, If $A$ is a $\delta-F-$module with a compatible structure and $K$ is a Picard-Vessiot extension for $(A, \delta_A)$ then there is a morphism of algebraic group $G(K|F)\to \mathrm{Aut}((A\otimes_FK)^\delta)$, where $\mathrm{Aut}((A\otimes_FK)^\delta)$ denotes the $F^\delta-$automorphisms that preserve the structure on  $(A\otimes_FK)^\delta.$  
\end{enumerate}
   \end{remark}

\section{structure of $\delta-F-$central simple algebras}
   The main objective of this section is to establish a correspondence between the nature of a $\delta-F-$central simple algebra and its $\delta-$Galois group, similar to what was established by the authors in \cite[Theorem 1.1]{MMVRS-1}. In fact, most of the proofs from \cite[$\S 3$ and $\S 4$]{MMVRS-1}, in particular \cite[Propositions 3.1 and 4.1]{MMVRS-1}, carry over to the context of iterative derivations without any need for modifications.  First, we shall recall a few definitions from \cite{MR2178661}  and \cite{MR3293724}.  Let $H$ be a closed subgroup of a connected reductive group $G.$ The group $H$ is called \emph{$G-$completely reducible} if whenever $H$ is contained in a parabolic subgroup $P$ of $G,$ then it is contained in a Levi subgroup of $P.$ The group $H$ is said to be \emph{$G-$irreducible} if $H$ is not contained in a proper parabolic subgroup of $G.$ The group $H$ is said to be \emph{$G-$indecomposable} if there is no parabolic subgroup $P$  of $G$ such that $H$ is contained in a Levi subgroup of $P.$ Note that a subgroup $H$ of $GL(V)$ is $\mathrm{GL(V)-}$completely reducible (respectively, $\mathrm{GL(V)-}$irreducible, $\mathrm{GL(V)-}$indecomposable) iff $V$ is a completely reducible (respectively, irreducible, indecomposable) $H-$module.


   \begin{lemma} \label{PGL_n-cr-irr-ind}Let $H$ be a closed subgroup of $\mathrm{PGL_n}(F^\delta).$ 
    \begin{enumerate}
   \item  \label{H-cr} $H$  is $\mathrm{PGL_n-}$completely reducible if and only if $M_n(F^{\delta})$ is a direct sum of minimal $H-$stable right ideals.
   \item \label{H-irr} $H$ is  $\mathrm{PGL_n-}$irreducible if and only if $\mathrm{M_n}(F^\delta)$ has no proper non trivial $H-$stable right ideals.
   \item \label{H-ind} $H$ is $\mathrm{PGL_n-}$indecomposable  if and only if $\mathrm{M_n}(F^\delta)$ cannot be written as a direct sum of proper $H-$stable right ideals.
   \end{enumerate}
  	
  \end{lemma}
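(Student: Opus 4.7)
The plan is to translate each of the three conditions on $H\leq\mathrm{PGL}_n(F^\delta)$ first into a condition on the natural representation $V=(F^\delta)^n$ of the preimage $\tilde H\subseteq\mathrm{GL}(V)$, and then into a condition on right ideals of $M_n(F^\delta)=\mathrm{End}(V)$, via the standard lattice bijection between subspaces of $V$ and right ideals of $\mathrm{End}(V)$.

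I would begin by recording the dictionary
\[
W\;\longmapsto\;R_W:=\{\,f\in\mathrm{End}(V):\mathrm{im}(f)\subseteq W\,\},
\]
which is an inclusion-preserving bijection from subspaces of $V$ onto right ideals of $M_n(F^\delta)$, carrying minimal non-zero subspaces to minimal right ideals. Using the projections attached to any splitting of $V$, a direct sum $V=W_1\oplus\cdots\oplus W_r$ corresponds to $M_n(F^\delta)=R_{W_1}\oplus\cdots\oplus R_{W_r}$ as right modules, and conversely. For $g\in\mathrm{GL}(V)$ one has $gR_Wg^{-1}=R_{g(W)}$, and since $g(W)$ depends only on the image of $g$ in $\mathrm{PGL}(V)$, the bijection is equivariant for the conjugation action of $\mathrm{PGL}(V)=\mathrm{Aut}(M_n(F^\delta))$ on right ideals and its natural action on subspaces.

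Next, I would reduce from $\mathrm{PGL}_n$ to $\mathrm{GL}_n$. The parabolic subgroups of $\mathrm{GL}(V)$ are the stabilizers of flags in $V$, and every such parabolic (and every Levi subgroup of one) contains the central $\mathbb{G}_m\subset\mathrm{GL}(V)$. Hence, via the quotient $\pi:\mathrm{GL}(V)\to\mathrm{PGL}(V)$, the parabolic (respectively, Levi) subgroups of $\mathrm{PGL}(V)$ are exactly the images of the parabolic (respectively, Levi) subgroups of $\mathrm{GL}(V)$. Setting $\tilde H:=\pi^{-1}(H)$, it follows that $H$ is $\mathrm{PGL}_n$-completely reducible, $\mathrm{PGL}_n$-irreducible, or $\mathrm{PGL}_n$-indecomposable if and only if $\tilde H$ has the corresponding property inside $\mathrm{GL}_n$, and a subspace $W\subseteq V$ is $\tilde H$-invariant precisely when $R_W$ is $H$-stable.

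Combining the two dictionaries with the parenthetical remark preceding the lemma yields the three equivalences. Part (\ref{H-irr}) follows because the absence of proper non-trivial $\tilde H$-invariant subspaces of $V$ is the absence of proper non-trivial $H$-stable right ideals of $M_n(F^\delta)$; part (\ref{H-ind}) follows because a non-trivial direct sum decomposition of $V$ into $\tilde H$-invariant subspaces corresponds to a non-trivial direct sum decomposition of $M_n(F^\delta)$ into proper $H$-stable right ideals; and part (\ref{H-cr}) follows because a decomposition of $V$ into simple $\tilde H$-submodules is exactly a decomposition of $M_n(F^\delta)$ into minimal $H$-stable right ideals. I do not foresee a genuine obstacle: the only piece that deserves a brief verification is the compatibility of $W\mapsto R_W$ with direct sums and with the $\mathrm{PGL}_n$-action, after which the lemma is pure translation through the two dictionaries.
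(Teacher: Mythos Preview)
Your argument is correct and follows essentially the same route as the paper: pull $H$ back to $\tilde H=\pi^{-1}(H)\subseteq\mathrm{GL}_n$, use that $\mathrm{GL}_n$-cr/irr/ind for $\tilde H$ is the usual module-theoretic notion for $V=(F^\delta)^n$, and then translate via the lattice bijection $W\mapsto R_W$ between $\tilde H$-stable subspaces of $V$ and $H$-stable right ideals of $M_n(F^\delta)$. The only cosmetic difference is that the paper packages your two dictionaries as citations---invoking that $\pi$ is a non-degenerate isogeny for the passage from $\mathrm{PGL}_n$ to $\mathrm{GL}_n$, and \cite[Proposition~4.1]{MMVRS-1} for the subspace/ideal correspondence---whereas you spell them out explicitly.
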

  \begin{proof}
  Let $\pi: \mathrm{GL_n}(F^\delta)\to \mathrm{PGL_n}(F^\delta)=\mathrm{Aut}(\mathrm{M_n}(F^\delta))$ be the canonical projection. Since $\pi$ is non-degenerate, a closed subgroup $H$ of $\mathrm{PGL_n}(F^\delta)$ is $\mathrm{PGL_n}(F^\delta)-$ completely reducible if and only if $\pi^{-1}(H)$ is $\mathrm{GL_n}(F^\delta)-$compeletely reducible. Also, $\pi^{-1}(H)$ is  $\mathrm{GL_n}(F^\delta)-$completely reducible if and only if  $(F^\delta)^n$ is a completely reducible $\pi^{-1}(H)-$module. Now,  we use the correspondence between $\pi^{-1}(H)-$stable subspaces of $(F^\delta)^n$ and $H-$stable ideals of $\mathrm{M_n}(F^\delta)$ as described in \cite[Proposition 4.1]{MMVRS-1} to prove (\ref{H-cr}). The assertions (\ref{H-irr})  and (\ref{H-ind}) are proved similarly.
  \end{proof}

 Let $A$ be a $\delta-F-$algebra and $K$ be a Picard Vessiot extension of $F$ that splits $A.$  Then, we know that $\mathrm G(K|F)$ acts as $\delta-F-$algebra automorphisms  on $(A\otimes K)^\delta\cong M_n(F^\delta),$ which in turn induces a  projective representation $\phi: \mathrm G(K|F)\to \mathrm{PGL_n}(F^\delta).$ We shall denote $\mathrm M_n(F^\delta)$ with this action of $\mathrm G(K|F)$ by $(M_n(F^\delta),G_\phi).$ Then, as in the context of characteristic zero differential fields, we have an exact analog of Tannakian formalism in the $\delta-$Galois theory of iterated differential modules (See \cite[$\S 13$]{MvdP03}, \cite{Nagy-Szamuely}). Then \cite[Proposition 3.1]{MMVRS-1}, in the context of iterative derivations, provide  a bijection between the collection of $\mathrm{G}(K|F)-$stable right ideals of $M_n(F^\delta)$ and right $\delta-$ideals\footnote{An ideal $I$ of $A$ is a \emph{$\delta-$ideal} of $A$ if for each $i\geq 0,$ $\delta^{(i)}_A$ maps $I$ to itself.} of $A.$

 We now make the following definitions. If  $A$ is a direct sum of minimal $\delta-$right ideals then $A$ is said to be  \emph{$\delta-$completely reducible}. If $A$ has no proper $\delta-$right ideal then we $A$ is said to be  \emph{$\delta-$irreducible}. If $A$ is not a direct sum of proper $\delta-$right ideals then we $A$ is said to be \emph{$\delta-$indecomposable}.
 
 \begin{theorem}
 	Let $(A,\delta)$ be a $\delta-F-$algebra with $F^\delta$ being an algebraically closed field.  Let $K$ be the Picard-Vessiot extension for  $\delta-F-$module $A$ and $\phi:\mathrm G(K/F)\rightarrow \mathrm{PGL_n}(F^\delta)$ be the corresponding projective representation. Then, $A$ is $\delta-$completely reducible (respectively, irreducible, indecomposable)  if and only if $\mathrm{G}(K|F)$ is $\mathrm{PGL_n}(F^\delta)-$completely reducible (respectively, irreducible, indecomposable)
 \end{theorem}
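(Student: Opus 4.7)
The plan is to chain together two correspondences already in hand. First, the projective representation $\phi:\mathrm G(K|F)\to\mathrm{PGL}_n(F^\delta)$ arises precisely from the identification $(A\otimes_F K)^\delta\cong \mathrm M_n(F^\delta)$ together with the $\mathrm G(K|F)$-action of \eqref{actiononconstants}, so the image $G_\phi$ is exactly the image of $\mathrm G(K|F)$ acting as $F^\delta$-algebra automorphisms on $\mathrm M_n(F^\delta)$. Lemma \ref{PGL_n-cr-irr-ind} then translates $\mathrm{PGL}_n$-complete reducibility (respectively, irreducibility, indecomposability) of $G_\phi$ into a statement about the lattice of $\mathrm G(K|F)$-stable right ideals of $\mathrm M_n(F^\delta)$: being a direct sum of minimal stable right ideals, having no proper nontrivial stable right ideals, or not decomposing as a direct sum of proper stable right ideals, respectively.

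Next I invoke the iterative analog of \cite[Proposition 3.1]{MMVRS-1}, recalled just before the theorem statement, which gives a bijection between $\mathrm G(K|F)$-stable right ideals of $\mathrm M_n(F^\delta)=(A\otimes_F K)^\delta$ and right $\delta$-ideals of $A$. What I actually need is that this bijection is a lattice isomorphism: it should preserve inclusions and finite sums and intersections, and hence send minimal objects to minimal objects and direct-sum decompositions to direct-sum decompositions. This is essentially the Tannakian content of the correspondence, since on the Picard-Vessiot ring the functors $N\mapsto (N\otimes_F K)^\delta$ and $V\mapsto (V\otimes_{F^\delta}K)^{\mathrm G(K|F)}$ are quasi-inverse exact equivalences and therefore preserve subobjects and their joins and meets; applied to the $\delta$-$F$-algebra $A$ viewed as a right $\delta$-$A$-module, they yield the right-ideal correspondence with the required order-theoretic properties.

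With this lattice isomorphism in place, the three equivalences follow by running through Lemma \ref{PGL_n-cr-irr-ind}. For complete reducibility: $\mathrm G(K|F)$ is $\mathrm{PGL}_n$-completely reducible $\iff$ $\mathrm M_n(F^\delta)$ is a direct sum of minimal $\mathrm G(K|F)$-stable right ideals $\iff$ $A$ is a direct sum of minimal right $\delta$-ideals $\iff$ $A$ is $\delta$-completely reducible. The arguments for $\delta$-irreducibility (no proper nontrivial right $\delta$-ideals) and for $\delta$-indecomposability (no nontrivial direct-sum decomposition into right $\delta$-ideals) are formally identical, using parts \eqref{H-irr} and \eqref{H-ind} of the lemma respectively.

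The step I expect to be the only real work is verifying that the bijection of \cite[Proposition 3.1]{MMVRS-1} transports \emph{minimality} and \emph{direct-sum decompositions}, not merely the set of ideals. One clean way to package this is to record that the functor $N\mapsto (N\otimes_F K)^\delta$ from $\{\{A\}\}$ to $\mathrm{Rep}(\mathrm G(K|F))$ is exact and fully faithful, so that short exact sequences of right $\delta$-$A$-modules correspond to short exact sequences of $\mathrm G(K|F)$-modules; specializing to submodules of $A$ gives exactly the required compatibility, and no further case analysis is needed.
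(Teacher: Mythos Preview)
Your proposal is correct and follows essentially the same route as the paper's proof, which simply instructs the reader to combine Lemma~\ref{PGL_n-cr-irr-ind} (applied to $H=\phi(\mathrm G(K|F))$) with the iterative analogue of \cite[Proposition 3.1]{MMVRS-1}. Your extra paragraph verifying that the ideal bijection is a lattice isomorphism---hence preserves minimality and direct-sum decompositions---is a welcome elaboration of a point the paper leaves implicit in its citation of \cite[Proposition 3.1]{MMVRS-1}.
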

 \begin{proof}
 	Use Lemma \ref{PGL_n-cr-irr-ind} with $H=\phi(G(K|F))$ in conjunction with 
    \cite[Proposition 3.1]{MMVRS-1}.
 \end{proof}
 
\begin{corollary}
	If $D$ is $\delta-F-$division algebra, then the $\delta-$Galois group of $D$ is $\mathrm{PGL_n}-$irreducible and  in particular, it is reductive. 
	\end{corollary}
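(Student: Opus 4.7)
The plan is to translate the division algebra structure of $D$ into the vanishing of proper $\delta-$right ideals, apply the preceding theorem in its irreducibility version, and then invoke a standard fact from the theory of $G-$complete reducibility to obtain reductivity of $\mathrm{G}(K|F).$

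First, I would note that a division algebra $D$ admits no nontrivial proper right ideals at all, so a fortiori it has no proper $\delta-$right ideals. Hence $D$ is $\delta-$irreducible in the sense of the definition preceding the theorem. Applying the irreducibility case of that theorem to the $\delta-F-$algebra $D,$ with $K$ a Picard-Vessiot extension splitting $D$ and $n=\mathrm{deg}(D),$ we conclude immediately that $\mathrm{G}(K|F)$ is $\mathrm{PGL_n}(F^\delta)-$irreducible.

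For the reductivity assertion, I would appeal to the general fact that a $G-$irreducible closed subgroup of a connected reductive group $G$ has reductive identity component. The quickest route is to observe that any $G-$irreducible subgroup is vacuously $G-$completely reducible (the only parabolic containing such an $H$ is $G$ itself, which is its own Levi subgroup), and then invoke the result of Serre that any $G-$completely reducible subgroup of a connected reductive group has reductive identity component \cite{MR2178661}. Alternatively, one can argue directly using Borel-Tits: if the unipotent radical of $\mathrm{G}(K|F)$ were nontrivial, it would be contained in the unipotent radical of some proper parabolic subgroup $P$ of $\mathrm{PGL_n}(F^\delta),$ and its normalizer in $\mathrm{PGL_n}(F^\delta)$ would be contained in $P;$ since $\mathrm{G}(K|F)$ normalizes its own unipotent radical, this would force $\mathrm{G}(K|F)\subseteq P,$ contradicting $\mathrm{PGL_n}-$irreducibility.

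The principal step of the argument is merely the invocation of the preceding theorem in the irreducibility case; the reductivity clause is a standard consequence of references already appearing in this section, so I do not anticipate any serious obstacle beyond correctly citing the appropriate form of Serre's result.
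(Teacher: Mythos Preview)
Your proposal is correct and follows essentially the same line as the paper's own proof: the paper argues that the image of the projective representation is $\mathrm{PGL_n}$-irreducible (via the correspondence between $\delta$-right ideals of $D$ and $\mathrm{G}(K|F)$-stable right ideals of $\mathrm{M_n}(F^\delta)$, i.e.\ exactly the content of the preceding theorem you invoke), and then deduces reductivity from the fact that $\mathrm{PGL_n}$ is reductive. Your write-up is in fact more explicit than the paper's in justifying the reductivity step, spelling out the ``$G$-irreducible $\Rightarrow$ $G$-cr $\Rightarrow$ reductive'' chain and the Borel--Tits alternative, where the paper simply asserts the conclusion.
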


     \begin{proof} 
         From Lemma \ref{PGL_n-cr-irr-ind} (\ref{H-irr}), we know that the image of the projective representation of the $\delta-$Galois group is $\mathrm{PGL_n}-$irreducible. Since $\mathrm{PGL_n}$ is reductive, it follows that the $\delta-$Galois group is also reductive. 
     \end{proof}

{\bf Acknowledgement.}  The authors would like to thank Professors A. Merkurjev of University of California, Los Angeles and D. Hoffmann of Technische Universit\"{a}t for their helpful suggestions and remarks.

    \bibliographystyle{alpha}
    \bibliography{MMVRS}
  \end{document}